\theoremstyle{plain}
\newtheorem{theorem}{Theorem}[section]
\newtheorem{lemma}{Lemma}[section]
\newtheorem{corollary}{Corollary}[section]
\newtheorem{proposition}{Proposition}[section]
\theoremstyle{definition}
\newtheorem{definition}{Definition}[section]
\newtheorem{conjecture}{Conjecture}[section]
\theoremstyle{remark}
\newtheorem{remark}{Remark}[section]
\numberwithin{equation}{section}
\newcommand{\Mod}[1]{\ (\mathrm{mod}\ #1)}
\renewcommand{\leq}{\leqslant}
\renewcommand{\geq}{\geqslant}
\DeclareRobustCommand{\stirling}{\genfrac\{\}{0pt}{}}
\begin{document}

\title{Pseudorandomness of primes at large scales}


\author{}
\address{}
\curraddr{}
\email{}
\thanks{}

\author{Sun-Kai Leung}
\address{D\'epartement de math\'ematiques et de statistique\\
Universit\'e de Montr\'eal\\
CP 6128 succ. Centre-Ville\\
Montr\'eal, QC H3C 3J7\\
Canada}
\curraddr{}
\email{sun.kai.leung@umontreal.ca}
\thanks{}

\subjclass[2020]{11N05; 60G55}

\date{}

\dedicatory{}

\keywords{}

\begin{abstract}

Assuming a $q$-variant of the prime $k$-tuple conjecture uniformly, we compute mixed moments of the number of primes in disjoint short intervals and progressions, respectively. This involves estimating the mean of singular series along products of lattices, which is of independent interest. As a consequence, we establish the convergence of both sequences of suitably normalized primes to a standard Poisson point process. 

\end{abstract}

\maketitle


\section{Introduction}

The study of prime numbers dates back over 2000 years to the era of Euclid and Eratosthenes. While significant progress has been made during these two millennia, the exact distribution of primes remains a mystery. One of the biggest challenges is that, despite being a deterministic sequence of natural numbers, primes exhibit highly pseudorandom behavior, apart from some obvious ``local obstructions" (see \cite[p. 194]{MR2459552}), such as the absence of consecutive primes other than the pair $\{2,3\}$ due to parity. Therefore, natural questions arise: to what extent are primes pseudorandom at appropriate larger scales? Can such pseudorandomness be rigorously justified?\footnote{See also, for instance, \cite[Section 9]{MR2415379} for the notion of pseudorandomness in the context of Green's transference principle.}

In this paper, we provide one possible answer that suitably normalized primes in short intervals and short progressions, respectively, are located on the positive real number line $(0,\infty)$ as if they are randomly positioned points in the sense of a \textit{Poisson point process}.

Such a point process models the random and independent occurrence of events. Due to its Poissonian property and complete independence, a Poisson point process is often referred to as a purely random process. Thanks to the Kolmogorov extension theorem, it is completely characterized by the finite-dimensional distribution, i.e., the joint probability distributions for the number of events in all finite collections of disjoint intervals (see \cite[p. 19]{MR1950431} for instance). This leads to the following convenient definition.

\begin{definition}[Standard Poisson point process] \label{def:poisson}
A point process $\xi=\sum_{n=1}^{\infty}\delta_{X_n}$ (or $\{ X_n \}_{n = 1}^{\infty}$ by abuse of notation) on $(0,\infty)$ is called a \textit{standard Poisson point process} if 
for any integer $r \geq 1$ and collection of disjoint intervals $\{I_i\}_{i=1}^{r}$ in $(0,\infty),$ we have
\begin{gather*}
\mathbb{P}\left( \xi(I_i)=k_i, 1 \leq i \leq r \right)
=\prod_{i=1}^{r} e^{-|I_i|}\frac{|I_i|^{k_i}}{{k_i}!},
\end{gather*}
where $ \xi(A):=\#\{ n \geq 1 \,:\, X_n \in A \}$ is the number of points in the subset $A \subseteq (0,\infty)$.
\end{definition}

To be more precise, we shall establish the convergence of sequences of suitably normalized primes to a standard Poisson point process conditionally. For our purpose, we first recall the definition of convergence in distribution of point processes (see \cite[pp. 3--4, 109]{MR3642325} and \cite[Theorem 4.11]{MR3642325} for instance), and propose a $q$-variant of the prime $k$-tuple conjecture, which is essentially \cite[Conjecture 1.1]{moments} without the power-saving error term, again inspired by the modified Cram\'{e}r model (see \cite[pp. 23--24]{MR1349149}).

\begin{definition}[Convergence in distribution] \label{def:conv}
Let $\xi, \xi_1, \xi_2, \ldots$ be point processes on $(0,\infty).$ Then we say \textit{$\xi_n$ converges to $\xi$ in distribution as $n \to \infty$} if $\xi_n \xrightarrow[]{vd} \xi$ as random measures. Equivalently, for any integer $r \geq 1$ and any collection of disjoint $\xi$-continuity intervals $\{I_i\}_{i=1}^{r}$ in $(0,\infty),$\footnote{If $\xi$ is a standard Poisson point process, then all intervals $I \subseteq (0,\infty)$ are automatically $\xi$-continuity sets, i.e., $\xi(\partial I)=0$ almost surely.} we have the convergence in distribution
\begin{align*}
(\xi_n(I_1), \ldots, \xi_n(I_r)) \xrightarrow[]{d} (\xi(I_1), \ldots, \xi(I_r))
\end{align*}
as $n \to \infty.$
\end{definition}

\begin{conjecture}[$q$-variant of Hardy--Littlewood prime $k$-tuple conjecture (HL\text{[$q$]})] \label{conj:hl}
Given integers $k,q, H \geq 1,$ let $N \gg \varphi(q)\log q$ be an integer and $\mathcal{H} \subseteq [0,H]$ be a set consisting of $k$ distinct multiples of $q$ which is \textit{admissible}, i.e., $v_p(\mathcal{H}):=\#\{ h \in \mathcal{H} \Mod{p} \}<p$ for any prime $p.$ Then as $N \to \infty,$ we have
\begin{align*}
\sum_{\substack{1 \leq n \leq N\\(n,q)=1}} \prod_{h \in \mathcal{H}}1_{\mathbb{P}}(n+h) = (1+o_{k,H}(1))
\mathfrak{S}(\mathcal{H};q)
\sum_{\substack{1 \leq n \leq N\\(n,q)=1}} 
\left( \prod_{h \in \mathcal{H}} \log (n+h) \right)^{-1},
\end{align*}
where $1_{\mathbb{P}}$ is the prime indicator function, and $\mathfrak{S}(\mathcal{H};q)$ is the \textit{singular series}
defined as the Euler product
\begin{align*}
\left( \frac{\varphi(q)}{q} \right)^{-k}
\prod_{p \nmid q} \left( 1-\frac{1}{p} \right)^{-k}
\left( 1-\frac{v_p(\mathcal{H})}{p} \right).
\end{align*}
\end{conjecture}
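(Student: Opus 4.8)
Because HL$[q]$ is a conjecture, my aim is not an unconditional proof but to explain why the stated asymptotic is the natural prediction and how it reduces to the classical Hardy--Littlewood conjecture for tuples of linear forms. The plan is to pass to residue classes modulo $q$: write the left-hand side as $\sum_{\substack{a\ (\mathrm{mod}\ q)\\ (a,q)=1}}\ \sum_{\substack{1\le n\le N\\ n\equiv a\ (\mathrm{mod}\ q)}}\prod_{h\in\mathcal{H}}1_{\mathbb{P}}(n+h)$, and for each $a$ coprime to $q$ substitute $n=qm+a$, so that the inner sum counts integers $m$ (ranging up to about $N/q$) for which the $k$ distinct linear forms $L_h(m):=qm+(a+h)$, $h\in\mathcal{H}$, are simultaneously prime. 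One then checks this tuple of forms is admissible: for $p\mid q$ we have $L_h(m)\equiv a\not\equiv 0\ (\mathrm{mod}\ p)$, so there are no forbidden residue classes, while for $p\nmid q$ multiplication by $q$ and translation by $a$ are bijections modulo $p$, so the number of forbidden classes is exactly $v_p(\mathcal{H})<p$.

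Applying the Hardy--Littlewood conjecture to each of these tuples evaluates the inner sum as $(1+o_{k,H}(1))\,\mathfrak{S}(L)\sum_{m}\prod_{h}\bigl(\log(qm+a+h)\bigr)^{-1}$, and the next step is to identify $\mathfrak{S}(L)$ with $\mathfrak{S}(\mathcal{H};q)$. Writing $\mathfrak{S}(L)=\prod_p(1-1/p)^{-k}\bigl(1-\omega_L(p)/p\bigr)$ with $\omega_L(p)$ the number of forbidden classes, the computation above gives $\omega_L(p)=0$ for $p\mid q$ and $\omega_L(p)=v_p(\mathcal{H})$ for $p\nmid q$; hence the local factors at primes dividing $q$ collapse to $\bigl(\prod_{p\mid q}(1-1/p)\bigr)^{-k}=(\varphi(q)/q)^{-k}$ --- which is precisely the normalizing factor in $\mathfrak{S}(\mathcal{H};q)$ --- and the remaining factors reproduce $\prod_{p\nmid q}(1-1/p)^{-k}(1-v_p(\mathcal{H})/p)$. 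Summing over the $\varphi(q)$ residues $a$ coprime to $q$ and recombining $n=qm+a$ then recovers $\mathfrak{S}(\mathcal{H};q)\sum_{\substack{1\le n\le N\\ (n,q)=1}}\bigl(\prod_{h}\log(n+h)\bigr)^{-1}$ up to a bounded additive error, which is negligible.

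Two observations make this a genuine reduction rather than a uniformity problem. First, the hypotheses already force $q$ to be small: since $\mathcal{H}\subseteq[0,H]$ consists of $k$ distinct multiples of $q$, one has $q\le H/(k-1)$ as soon as $k\ge 2$, so the forms $L_h$ have coefficients bounded in terms of $H$ and the Hardy--Littlewood error term is uniformly $o_{k,H}(1)$ as $N\to\infty$. Second, the only case the argument does not cover is $k=1$ --- in particular $\mathcal{H}=\{0\}$ with $q$ as large as $N\gg\varphi(q)\log q$ permits --- where $\mathfrak{S}(\{h\};q)=q/\varphi(q)$ and the statement is just the prime number theorem together with $\#\{p\le N:p\mid q\}=O(\log q/\log\log q)=o(\pi(N))$. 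The weight $\bigl(\prod_h\log(n+h)\bigr)^{-1}$ is itself the prediction of the modified Cram\'{e}r model, in which $1_{\mathbb{P}}(m)$ for $(m,q)=1$ is modeled as an independent Bernoulli variable of mean $\frac{q}{\varphi(q)}\cdot\frac{1}{\log m}$, with $\mathfrak{S}(\mathcal{H};q)$ encoding the correlations imposed by congruences modulo the primes $p\nmid q$. The main obstacle is therefore not the bookkeeping but the input: the Hardy--Littlewood conjecture for $k$-tuples of linear forms is open for every $k\ge 2$, so HL$[q]$ cannot currently be proved unconditionally --- which is exactly why it is taken as a hypothesis throughout this paper.
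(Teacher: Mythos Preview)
The statement is labeled a \emph{conjecture} in the paper and is never proved there; it is taken as a standing hypothesis for the main theorems. So there is no ``paper's own proof'' to compare against, and your decision to offer a heuristic reduction rather than an unconditional argument is exactly right.

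Your reduction is sound and is the natural way to see that HL$[q]$ is just the classical Hardy--Littlewood conjecture for the system of linear forms $L_h(m)=qm+(a+h)$, summed over reduced residues $a$. The local computation $\omega_L(p)=0$ for $p\mid q$ and $\omega_L(p)=v_p(\mathcal{H})$ for $p\nmid q$ is correct, and the identification of $\prod_{p\mid q}(1-1/p)^{-k}$ with $(\varphi(q)/q)^{-k}$ cleanly explains the shape of $\mathfrak{S}(\mathcal{H};q)$. The observation that $q\le H/(k-1)$ for $k\ge 2$ is a nice point: it shows that for tuples the required uniformity in $q$ is automatic once one has uniformity in $H$, so HL$[q]$ really is no stronger than HL$[1]$ in that regime. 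Your separate handling of $k=1$ via the prime number theorem is also correct. In short, the proposal is an appropriate and accurate commentary on a statement the paper intentionally leaves unproved.
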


Note that $
\mathfrak{S}(\mathcal{H};q) = \left( \varphi(q)/q \right)^{-1} \mathfrak{S}(\mathcal{H}),$ where 
\begin{align*}
\mathfrak{S}(\mathcal{H}):=\prod_{p} \left( 1-\frac{1}{p} \right)^{-k}
\left( 1-\frac{v_p(\mathcal{H})}{p} \right)
\end{align*}
is the classical singular series.

Assuming HL[1] with sufficient uniformity, Gallagher \cite{MR409385} showed that the prime count in short intervals has a Poissonian limiting distribution. More precisely, if $\lambda>0,$ then for any integer $k \geq 0,$ we have
\begin{gather*}
\lim_{N \to \infty}
\frac{1}{N}\#\{ N <n \leq 2N : \pi(n+H)-\pi(n)=k \}
=e^{-\lambda} \frac{\lambda^k}{k!},
\end{gather*}
where $H=\lambda \log N$, i.e., we have the convergence in distribution to a Poissonian random variable with parameter $\lambda$
\begin{gather*}
\pi(n+H)-\pi(n) \xrightarrow[]{d} Poisson(\lambda).
\end{gather*}
For longer intervals, assuming HL[1] with a power-saving error term, Montgomery and Soundararajan \cite{MR2104891} showed that the prime count in short intervals has a Gaussian limiting distribution (see \cite[pp. 59--73]{MR2290490} for further discussion).

Inspired by their work, assuming HL[$q$] with a power-saving error term, the author \cite{moments} recently computed moments of the number of primes not exceeding $N$ in progressions to a common large modulus $q$ as $a \pmod{q}$ varies. Consequently, depending on the size of $\varphi(q)$ with respect to $N$, the prime count exhibits a Gaussian or Poissonian law. In particular, if $\lambda>0,$ then for any integer $k \geq 0,$ we have
\begin{align*}
\lim_{N \to \infty} \frac{1}{\varphi(q)}\left| \left\{ a \Mod{q} \,: \, 
\pi(N;q,a)=k \right\}
 \right| 
=e^{-\lambda}\frac{\lambda^k}{k!},
\end{align*}
where $N=\lambda\varphi(q)\log q$, i.e., we have the convergence in distribution to a Poissonian random variable with parameter $\lambda$
\begin{gather*}
\pi(N;q,a) \xrightarrow[]{d} Poisson(\lambda).
\end{gather*}
See also \cite{MR1024571} in which $a$ is fixed but $q$ varies instead.

As we shall demonstrate, both results of Gallagher and the author are special cases of Corollaries \ref{thm:shortint} and \ref{thm:ap} respectively.
\\~\\
\noindent\textit{Notation.} 
Throughout the paper, we use the standard big $O,$ little $o$ notations, and the Vinogradov notation $\ll,$ where the implied constants depend only on the subscripted parameters. We denote $\sum_{a \Mod{q}}^{*}$ as the sum over reduced residues modulo $q.$ We also write $\sum_{x_i}$ for $\sum_{x_1, \ldots, x_r}$ to simplify notation, unless otherwise specified.

\section{Main results}

Analogous to \cite{MR409385}, \cite{MR2104891} and \cite{moments}, to establish the convergence in distribution of point processes, we compute mixed moments of the number of primes in short intervals and short progressions respectively. To state our main results, let us denote $\operatorname{diam} (\mathcal{I}):= \sup \cup_{1 \leq i \leq r} I_i- \inf \cup_{1 \leq i \leq r} I_i$ as the diameter of the collection of disjoint intervals $\mathcal{I}=\{I_i\}_{i=1}^{r}$ in $(0,\infty),$ and
 $\stirling{k}{j}$ as the \textit{Stirling number of the second kind}, i.e., the number of ways to partition a set of $k$ objects into $j$ non-empty subsets.

\begin{theorem}
Given an integer $r \geq 1,$ let $\mathcal{I}=\{I_i\}_{i=1}^{r}$ be a collection of disjoint intervals in $(0,\infty).$ Suppose HL[1] holds for $1 \leq H \leq \operatorname{diam} (\mathcal{I})$ uniformly. Then for all integers $k_1, \ldots, k_r \geq 0,$ we have
\begin{gather*}
\lim_{N \to \infty} \frac{1}{N}\sum_{N<n \leq 2N}
\prod_{i=1}^r \#\left\{ p > n \,:\, 
\frac{p-n}{\log N} \in I_i
\right\}^{k_i}
=\prod_{i=1}^r \Bigg( \sum_{j_i=1}^{k_i} \stirling{k_i}{j_i}|I_i|^{j_i} \Bigg).
\end{gather*}

\end{theorem}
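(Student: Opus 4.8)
The plan is to reduce the computation of mixed moments to a sum over prime $k$-tuples, apply HL[1] to replace the prime-counting sum by a weighted sum of singular series, and then evaluate the resulting main term by the mean-value estimate for singular series (the companion result advertised in the abstract). Expanding the $i$-th factor $\#\{p>n : (p-n)/\log N \in I_i\}^{k_i}$ as a $k_i$-fold sum over primes $p=n+h$ with $h/\log N \in I_i$, the whole product becomes a sum over tuples $\mathcal{H}=(h_1,\dots,h_k)$ (with $k=k_1+\dots+k_r$), where the first $k_1$ shifts lie in $(\log N) I_1$, the next $k_2$ in $(\log N) I_2$, and so on, of $\prod_{h\in\mathcal{H}}1_{\mathbb{P}}(n+h)$, summed over $N<n\le 2N$. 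The key combinatorial point is that a tuple with repeated entries collapses: if the $k$ shifts take exactly $j=j_1+\dots+j_r$ distinct values (with $j_i$ distinct values coming from the block assigned to $I_i$), the inner sum over $n$ only sees those $j$ distinct shifts, and the number of surjections from a $k_i$-set onto a $j_i$-set is what produces the Stirling numbers $\stirling{k_i}{j_i}$ after we sum over the sizes $j_i$.

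Next I would apply HL[1] to each admissible $j$-tuple $\mathcal{H}'$ of distinct shifts, writing
\[
\sum_{N<n\le 2N}\prod_{h\in\mathcal{H}'}1_{\mathbb{P}}(n+h)
=(1+o(1))\,\mathfrak{S}(\mathcal{H}')\sum_{N<n\le 2N}\Big(\prod_{h\in\mathcal{H}'}\log(n+h)\Big)^{-1}.
\]
Since all shifts are $O(\log N)=o(N)$, we have $\log(n+h)=(1+o(1))\log N$ uniformly, so the inner $n$-sum is $(1+o(1)) N(\log N)^{-j}$. Non-admissible tuples contribute $0$ on the prime side, but they must still be accounted for: I would show their total contribution to the singular-series main term is negligible (they force $v_p(\mathcal{H}')=p$ for some small $p$, which is a codimension-$\ge 1$ condition and costs a factor $\log N/p$ relative to the full count, hence is lower order after summing over $p$). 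The surviving main term is then
\[
(1+o(1))\,\frac{N}{(\log N)^{k}}\sum_{\substack{j_1,\dots,j_r\\ 1\le j_i\le k_i}}\ \prod_{i=1}^r\stirling{k_i}{j_i}\cdot (\log N)^{k-j}\!\!\sum_{\substack{\mathcal{H}'=(\mathbf{h}^{(1)},\dots,\mathbf{h}^{(r)})\\ \mathbf{h}^{(i)}\subseteq (\log N)I_i,\ |\mathbf{h}^{(i)}|=j_i\\ \text{distinct, }q\mid h}}\mathfrak{S}(\mathcal{H}'),
\]
where the shifts are required to be distinct across the whole tuple and to be multiples of $q=1$ (so the lattice condition is vacuous here, but this is the point of contact with the general singular-series mean estimate).

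The main obstacle is the evaluation of the last sum, $\sum \mathfrak{S}(\mathcal{H}')$ over $j$-tuples of distinct integers with the $i$-th block confined to the interval of length $|I_i|\log N$. Here I would invoke the mean-value theorem for singular series over products of lattices stated in the paper: the average of $\mathfrak{S}$ over such a box is $1+o(1)$, so the sum is asymptotic to the number of such tuples, namely $(1+o(1))\prod_{i=1}^r (|I_i|\log N)^{j_i}=(1+o(1))(\log N)^{j}\prod_i |I_i|^{j_i}$ (the distinctness and cross-block-disjointness constraints only remove $o(1)$ proportion since the blocks live in disjoint intervals and each block has bounded size). Substituting back, the powers of $\log N$ cancel completely — $(\log N)^{-k}\cdot(\log N)^{k-j}\cdot(\log N)^{j}=1$ — and we are left with $\prod_{i=1}^r\big(\sum_{j_i=1}^{k_i}\stirling{k_i}{j_i}|I_i|^{j_i}\big)$, as claimed. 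The delicate points requiring care are the uniformity in HL[1] over the range $1\le H\le \operatorname{diam}(\mathcal{I})$ (needed so that the $o(1)$ in HL[1] can be summed over the $O((\log N)^j)$ tuples), the negligibility of inadmissible tuples, and confirming that the error terms in the singular-series mean estimate remain $o$ of the main term after multiplication by $(\log N)^{k-j}$ and summation.
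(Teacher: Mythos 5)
Your proposal follows essentially the same route as the paper: expand each $k_i$-th power into a $k_i$-fold sum over prime shifts, collapse repeated entries via the Stirling-number identity, apply HL[1] to each admissible tuple of distinct shifts to pass to singular series, and then invoke the mean-value estimate for singular series over a product of boxes (Proposition \ref{prop:sing} with $q=1$) to obtain $\prod_i \sum_{j_i}\stirling{k_i}{j_i}|I_i|^{j_i}$ after the powers of $\log N$ cancel. The paper omits this proof, giving only the (more technical) progressions analogue (Theorem \ref{thm:ap}, involving Hooley's switching and an extra Selberg-sieve step that has no counterpart here), and your argument is the intended specialization; the one small imprecision is that the number of surjections from a $k_i$-set onto a $j_i$-set is $j_i!\stirling{k_i}{j_i}$, which matches the ordered distinct tuples you sum over, so the $j_i!$ cancels and the coefficient $\stirling{k_i}{j_i}$ is indeed what survives.
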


\begin{theorem}

Given an integer $r \geq 1,$ let $\mathcal{I}=\{I_i\}_{i=1}^{r}$ be a collection of disjoint intervals in $(0,\infty).$ Suppose HL[q] holds for $1 \leq H \leq \operatorname{diam} (\mathcal{I}) \varphi(q)\log q$ uniformly. Then for all integers $k_1, \ldots, k_r \geq 0,$ we have
\begin{gather*}
\lim_{q \to \infty}\frac{1}{\varphi(q)}\sideset{}{^*}\sum_{a \Mod{q}}
\prod_{i=1}^{r}  \#\left\{ p \equiv a \Mod{q} \,:\, 
\frac{p}{\varphi(q)\log q} \in I_i
\right\}^{k_i}
=\prod_{i=1}^r \Bigg( \sum_{j_i=1}^{k_i} \stirling{k_i}{j_i}|I_i|^{j_i}  \Bigg).
\end{gather*}

\end{theorem}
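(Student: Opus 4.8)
The plan is to reduce the computation of the mixed moment to a sum over tuples of primes in arithmetic progressions, then invoke HL[$q$] and a mean-value estimate for the singular series $\mathfrak{S}(\mathcal{H};q)$ over products of lattices. First I would expand each factor $\#\{p \equiv a \Mod q : p/(\varphi(q)\log q) \in I_i\}^{k_i}$ as a sum over $k_i$-tuples of (not necessarily distinct) primes $p \equiv a \Mod q$ lying in the scaled interval; combining the $r$ factors, the left-hand side becomes $\varphi(q)^{-1}\sum_{a}^{*}$ of a sum over a $(k_1+\cdots+k_r)$-tuple of primes, all congruent to the same residue $a \Mod q$, with the $j$-th block of coordinates constrained to the dilated interval $\varphi(q)\log q \cdot I_i$. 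Summing over $a$ first collapses the condition ``all $p_\ell$ congruent to a common $a$ coprime to $q$'' into ``all pairwise differences divisible by $q$ and each $p_\ell$ coprime to $q$''; writing $p_\ell = n + h_\ell$ with the $h_\ell$ multiples of $q$, this is exactly the setting of HL[$q$], so the main term is $\mathfrak{S}(\mathcal{H};q)$ times the expected ``logarithmic'' count, where $\mathcal{H}$ runs over the distinct values among the $h_\ell$.

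Next I would separate the tuple sum according to the set partition of the $K := k_1 + \cdots + k_r$ coordinates induced by coincidences $p_\ell = p_{\ell'}$: the diagonal structure is what produces the Stirling numbers, exactly as in Gallagher's argument and in \cite{moments}. For a fixed partition into $j = j_1 + \cdots + j_r$ blocks (with $j_i$ blocks among the $i$-th group of coordinates, forced since distinct intervals are disjoint), the contribution is a sum over $j$ distinct primes, which after applying HL[$q$] and partial summation contributes $\prod_i |I_i|^{j_i}$ times the average of $\mathfrak{S}(\mathcal{H};q)$ over admissible $\mathcal{H}$ consisting of $j$ multiples of $q$ in a box of size $\asymp \varphi(q)\log q$. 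Here is where the singular-series mean value advertised in the abstract enters: one needs $\sum_{\mathcal{H}} \mathfrak{S}(\mathcal{H};q) = (1+o(1)) \cdot (\text{number of } \mathcal{H})$, i.e. the average of the singular series over any product of lattices (here the lattice $q\mathbb{Z}$ intersected with the prescribed boxes) tends to $1$; granting that, the main term telescopes to $\sum_{j_i=1}^{k_i}\stirling{k_i}{j_i}|I_i|^{j_i}$ in each factor, giving the claimed product.

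The main obstacle, and the technical heart of the paper, is precisely this mean-value estimate for $\mathfrak{S}(\mathcal{H};q)$ when $\mathcal{H}$ is constrained to lie in a product of lattices (rather than in an interval, as in the classical Gallagher / Montgomery--Soundararajan computations). One must handle the Euler product $\prod_{p\nmid q}(1-1/p)^{-k}(1-v_p(\mathcal{H})/p)$ uniformly: expand $\mathfrak{S}(\mathcal{H};q)$ via its standard Dirichlet-series-type expansion as a sum over squarefree moduli coprime to $q$ of a multiplicative weight times $\prod_p$-type local factors counting solutions of $h_i \equiv h_{i'} \Mod p$, then sum over $\mathcal{H}$ with the lattice condition $q \mid h_i$. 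The off-diagonal terms (those where several $h_i$ coincide modulo some small prime $p \nmid q$) must be shown to contribute a lower-order amount; the delicate point is keeping the error terms uniform in $k$, $H$, and $q$, and checking that the interaction between the modulus $q$ (through $(\varphi(q)/q)^{-k}$ and the restriction $p \nmid q$) and the box sizes $\asymp \varphi(q)\log q$ is benign. A secondary but routine obstacle is the passage from the logarithmic sum $\sum_{(n,q)=1}(\prod_h \log(n+h))^{-1}$ to $|I_i|^{j_i} \cdot (\text{count})$ via partial summation, using that $\log(n+h) \sim \log N$ and $N \asymp \varphi(q)\log q$ on the relevant range, together with a truncation argument to discard the negligibly many tuples with two coordinates in distinct intervals closer than a prime could bridge. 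Once these are in place, assembling the pieces and matching the Stirling expansion is mechanical.
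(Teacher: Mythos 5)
Your proposal is correct and takes essentially the same route as the paper: expand the moments into tuple sums, organize diagonal coincidences by Stirling numbers, apply HL[$q$] to the admissible off-diagonal tuples, and invoke the lattice mean-value estimate for $\mathfrak{S}(\mathcal{H};q)$ (Proposition~\ref{prop:sing}) to evaluate the main term. The paper's implementation is somewhat more explicit about the two changes of variables (Hooley's switching trick) and uses a Selberg-sieve upper bound together with Lemma~\ref{lem:upperbound} to control the contribution from tuples whose associated interval is shorter than $\widetilde q$, which is the concrete form of the ``truncation argument'' you flag as a secondary obstacle.
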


\begin{remark}
It is, in fact, easier and considered more natural to compute factorial moments rather than ordinary moments in this context. However, we opt for the latter to ensure compatibility with the existing literature.
\end{remark}

Applying the method of moments, by Definitions \ref{def:poisson} and \ref{def:conv}, the convergence of both sequences of suitably normalized primes in short intervals and short progressions to a standard Poisson point process is an immediate consequence.

\begin{corollary} \label{thm:shortint}

Given an integer $r \geq 1,$ let $\mathcal{I}=\{I_i\}_{i=1}^{r}$ be a collection of disjoint intervals in $(0,\infty).$ Suppose HL[1] holds for $1 \leq H \leq  \operatorname{diam} (\mathcal{I})$ uniformly. Then for all integers $k_1, \ldots, k_r \geq 0,$ we have
\begin{gather*}
\lim_{N \to \infty}\frac{1}{N} \# \left\{ N<n \leq 2N  \,:\, \#\left\{ p > n \,:\, 
\frac{p-n}{\log N} \in I_i
\right\}=k_i, 1 \leq i \leq r\right\}\\
=\prod_{i=1}^r e^{-|I_i|} \frac{|I_i|^{k_i}}{k_i!},
\end{gather*}
i.e., the point process $\{ \frac{p_i(n)-n}{\log N} \}_{i=1}^{\infty},$ for $N<n \leq 2N$ chosen uniformly at random, converges in distribution to the standard Poisson point process as $N \to \infty,$ where $p_i$ denotes the $i$-th least prime after $n$. In particular, by taking $r=1$ and $I_1=(0, \lambda],$ we recover \cite[Theorem 1]{MR409385}.
\end{corollary}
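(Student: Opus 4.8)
The plan is to derive the corollary from the first Theorem by the (multidimensional) method of moments. Fix $N$, let $n$ be uniformly distributed over the integers in $(N,2N]$, and set
\[
X_i^{(N)} := \#\Big\{ p > n \,:\, \tfrac{p-n}{\log N} \in I_i \Big\}, \qquad 1 \le i \le r,
\]
so that $X_i^{(N)} = N(\xi_N;I_i)$ for the random point process $\xi_N = \sum_{i \ge 1}\delta_{(p_i(n)-n)/\log N}$ in the statement. Each $X_i^{(N)}$ counts primes in an interval of length $|I_i|\log N$, hence is bounded and has finite moments of all orders. I would prove that $(X_1^{(N)},\dots,X_r^{(N)})$ converges in distribution, as $N\to\infty$, to a vector $(Y_1,\dots,Y_r)$ of independent random variables with $Y_i\sim\mathrm{Poisson}(|I_i|)$. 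Evaluating the joint probability mass function at $(k_1,\dots,k_r)$ then yields the displayed identity, and since the collection $\mathcal{I}$ of disjoint intervals may be taken arbitrary, Definition \ref{def:conv} upgrades this to the convergence $\xi_N\xrightarrow[]{d}\xi$ to the standard Poisson point process.

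The first step is to identify the right-hand side of the first Theorem as a mixed moment of $(Y_1,\dots,Y_r)$. For $Y\sim\mathrm{Poisson}(\lambda)$ and $k\ge 1$ one has the classical identity $\mathbb{E}[Y^k]=\sum_{j=1}^{k}\stirling{k}{j}\lambda^j$ (and $\mathbb{E}[Y^0]=1$), which follows from $Y^k=\sum_{j}\stirling{k}{j}(Y)_j$, where $(Y)_j$ denotes the falling factorial, together with $\mathbb{E}[(Y)_j]=\lambda^j$. By independence,
\[
\mathbb{E}\Big[\prod_{i=1}^{r}Y_i^{k_i}\Big]=\prod_{i=1}^{r}\Big(\sum_{j_i=1}^{k_i}\stirling{k_i}{j_i}|I_i|^{j_i}\Big)
\]
(factors with $k_i=0$ read as $1$), so the first Theorem asserts precisely that every mixed moment of $(X_1^{(N)},\dots,X_r^{(N)})$ converges to the corresponding mixed moment of $(Y_1,\dots,Y_r)$.

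It then remains to invoke the method of moments. The law of $(Y_1,\dots,Y_r)$ is determined by its moments: each coordinate has the everywhere-finite moment generating function $t\mapsto e^{|I_i|(e^t-1)}$, hence the joint moment generating function is finite on all of $\mathbb{R}^r$ and the $r$-dimensional moment problem is determinate. Consequently convergence of all mixed moments forces $(X_1^{(N)},\dots,X_r^{(N)})\xrightarrow[]{d}(Y_1,\dots,Y_r)$; as both sides are supported on the discrete set $\mathbb{Z}_{\ge 0}^{r}$, this is equivalent to $\mathbb{P}(X_i^{(N)}=k_i,\ 1\le i\le r)\to\prod_{i=1}^{r}e^{-|I_i|}|I_i|^{k_i}/k_i!$, which is the corollary. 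Specializing to $r=1$ and $I_1=(0,\lambda]$ gives $X_1^{(N)}=\pi(n+\lambda\log N)-\pi(n)$ with limit $\mathrm{Poisson}(\lambda)$, recovering \cite[Theorem 1]{MR409385}.

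Beyond the first Theorem, which carries all of the arithmetic content, I do not anticipate a genuine obstacle; the one point that needs real care is the multivariate method of moments, namely confirming that the limiting product-of-Poissons law is moment-determinate so that no spurious subsequential limit can arise. This is exactly why I would appeal to the globally finite moment generating function of the product measure rather than, say, Carleman's condition applied to each coordinate separately.
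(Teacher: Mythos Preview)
Your proposal is correct and follows essentially the same route as the paper: the corollary is obtained directly from the first Theorem by the multivariate method of moments, using that independent Poisson laws are moment-determinate (the paper simply says ``applying the method of moments'' and does not spell out the moment-determinacy step you justify via the globally finite moment generating function). Your write-up is in fact more detailed than the paper's, which treats the passage from mixed moments to the joint Poisson limit as immediate.
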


\begin{remark}
On the ``frequency side", one formulation of the GUE hypothesis states that the point process
$\{ \frac{\log T}{2\pi}(\gamma_n-t) \}$, for $T <t \leq 2T$ chosen uniformly at random, converges in distribution to the sine-kernel process, where $\gamma_n$ is the $n$-th (counted with potential multiplicities) least positive ordinate of a nontrivial zero of the Riemann zeta function $\zeta(s)$ (see \cite{dereyna2023convergence} for instance).
\end{remark}

\begin{corollary} \label{thm:ap}

Given an integer $r \geq 1,$ let $\mathcal{I}=\{I_i\}_{i=1}^{r}$ be a collection of disjoint intervals in $(0,\infty).$ Suppose HL[$q$] holds for $1 \leq H \leq  \operatorname{diam} (\mathcal{I})\varphi(q)\log q$ uniformly. Then for all integers $k_1, \ldots, k_r \geq 0,$ we have
\begin{gather*}
\lim_{q \to \infty}\frac{1}{\varphi(q)} \# \left\{ a \Mod{q} \,:\, \#\left\{ p \equiv a \Mod{q} \,:\, 
\frac{p}{\varphi(q)\log q} \in I_i
\right\}=k_i,  1 \leq i \leq r\right\}\\
=\prod_{i=1}^r e^{-|I_i|} \frac{|I_i|^{k_i}}{k_i!},
\end{gather*}
i.e., the point process $\{ \frac{p_i(q,a)}{\varphi(q)\log q} \}_{i=1}^{\infty},$ for $a \Mod{q}$ chosen uniformly at random, converges in distribution to the standard Poisson point process as $q \to \infty,$ where $p_i(q,a)$ denotes the $i$-th least prime which is congruent to $a \Mod{q}.$
In particular, by taking $r=1$ and $I_1=(0, \lambda],$ we recover \cite[Corollary 1.2]{moments}.
\end{corollary}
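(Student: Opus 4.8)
The plan is to derive Corollary \ref{thm:ap} from Theorem 2.2 by the classical method of moments for point processes. Since a standard Poisson point process on $(0,\infty)$ is uniquely determined by its finite-dimensional distributions, which in turn are determined by the joint factorial-moment structure (equivalently, by ordinary mixed moments, since the Poisson law is moment-determinate and each coordinate $N(\xi;I_i)$ has all moments finite), it suffices to show that the mixed moments of the random vector
\begin{align*}
\left( N_q(I_1),\ldots,N_q(I_r) \right), \qquad N_q(I_i):=\#\left\{ p\equiv a \Mod q \,:\, \tfrac{p}{\varphi(q)\log q}\in I_i \right\},
\end{align*}
with $a \Mod q$ chosen uniformly at random, converge to those of a vector of independent Poisson random variables with means $|I_1|,\ldots,|I_r|$. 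By Theorem 2.2 (invoked with multi-indices), for every $(k_1,\ldots,k_r)$ we have $\mathbb{E}\big[\prod_i N_q(I_i)^{k_i}\big]\to \prod_i \big(\sum_{j_i=1}^{k_i}\stirling{k_i}{j_i}|I_i|^{j_i}\big)$, and the right-hand side is exactly $\prod_i \mathbb{E}[Z_i^{k_i}]$ where $Z_i\sim\mathrm{Poisson}(|I_i|)$ are independent, since the $k$-th moment of a Poisson$(\mu)$ variable is the Touchard polynomial $\sum_{j=1}^{k}\stirling{k}{j}\mu^{j}$ (Dobiński-type identity). Thus the limiting mixed moments are those of the target vector.

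Next I would upgrade convergence of moments to convergence in distribution. Since the limiting vector $(Z_1,\ldots,Z_r)$ has independent Poisson coordinates, each coordinate's moment generating function is finite in a neighbourhood of the origin, so the joint law of $(Z_1,\ldots,Z_r)$ is determined by its moments (a multivariate moment problem is determinate when each marginal is, e.g. by Carleman's condition or by the analyticity of the MGF). Hence convergence of all mixed moments implies $(N_q(I_1),\ldots,N_q(I_r))\xrightarrow{d}(Z_1,\ldots,Z_r)$ as $q\to\infty$; concretely one can pass through the joint factorial moments $\mathbb{E}[\prod_i (N_q(I_i))_{(m_i)}]$, which converge to $\prod_i |I_i|^{m_i}$, the factorial moments of the independent Poisson vector, and these determine the finite-dimensional distributions of a simple point process. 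By Definition \ref{def:conv} and the criterion that convergence of all finite-dimensional distributions (over disjoint intervals) is equivalent to convergence in distribution as random measures, this yields that the rescaled point process $\{p_i(q,a)/(\varphi(q)\log q)\}_{i=1}^{\infty}$ converges in distribution to a point process whose finite-dimensional distributions match Definition \ref{def:poisson}, i.e. the standard Poisson point process. Reading off the probability statement, $\mathbb{P}(N_q(I_i)=k_i,\,1\le i\le r)\to\prod_i e^{-|I_i|}|I_i|^{k_i}/k_i!$, which is precisely the displayed limit in the corollary.

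Finally, the special case $r=1$, $I_1=(0,\lambda]$ gives $\tfrac{1}{\varphi(q)}|\{a\Mod q:\pi(N;q,a)=k\}|\to e^{-\lambda}\lambda^k/k!$ with $N=\lambda\varphi(q)\log q$, recovering \cite[Corollary 1.2]{moments}; one just has to check that the event $\{p\equiv a \Mod q: p/(\varphi(q)\log q)\in(0,\lambda]\}=\{p\le N,\,p\equiv a\Mod q\}$ up to the negligible boundary and to the primes dividing $q$, which contribute $O(\omega(q))=o(\varphi(q))$ on average and do not affect the limit. I expect the only genuinely delicate point to be the justification that convergence of mixed moments implies joint convergence in distribution together with the reduction to disjoint intervals in Definition \ref{def:conv} — that is, confirming the moment problem is determinate in the multivariate setting and that the point-process convergence criterion of \cite{MR2371524} applies to the simple point process built from $\{p_i(q,a)\}$; everything else is bookkeeping, since Theorem 2.2 already supplies the arithmetic input. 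One subtlety worth a remark is that the $\stirling{k}{j}$ appearing in Theorem 2.2 are exactly the coefficients in the Poisson moment formula, so the identification of the limit is immediate and requires no further computation.
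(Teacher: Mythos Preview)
Your proposal is correct and follows exactly the route the paper indicates: the paper states that, ``applying the method of moments, by Definitions \ref{def:poisson} and \ref{def:conv},'' the corollary is an immediate consequence of Theorem 2.2, and you have simply spelled out that deduction (identifying $\sum_{j}\stirling{k}{j}|I|^{j}$ as the Poisson moment and invoking moment-determinacy). Your write-up is in fact more detailed than the paper's one-line justification, but the approach is the same.
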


\begin{remark}
Similarly, it is also believed that
$\{ \frac{\log q}{2\pi}\gamma_{\chi,n} \},$ for a (non-trivial) primitive character $\chi \Mod{q}$ chosen uniformly at random, converges in distribution to the sine-kernel process, where $\gamma_{\chi,n}$ is the $n$-th (counted with potential multiplicities) least positive ordinate of a nontrivial zero of the Dirichlet L-function $L(s,\chi)$ (see \cite{MR2013141} for instance).
\end{remark}

The proofs of Theorems \ref{thm:shortint} and \ref{thm:ap} share the same structure, although the latter is more technical due to additional changes of variables. To avoid repetition, this paper includes only the proof of Theorem \ref{thm:ap}. 

\section{Estimates of singular series}

As observed in \cite{MR409385}, while primes exhibit ``local" correlations, leading to those singular series defined in Conjecture \ref{conj:hl}, such correlations diminish ``globally" as the classical singular series is $1$ on average over hypercubes. Similarly, the following (unconditional) 
proposition, which is a ``multidimensional" version of \cite[Lemma 6.1]{moments} but without an explicit error term, lies at the crux of the mixed moment computation.

\begin{proposition} \label{prop:sing}
Given integers $q,r \geq 1,$ let $\{ J_i \}_{i=1}^r$ be a collection of disjoint intervals in $(0,\infty)$ with $\min_{1 \leq i \leq r} |J_i|/q \to \infty$ as $\min_{1 \leq i \leq r} |J_i| \to \infty.$ Then for any $(b,q)=1$ and $l_1, \ldots, l_r \geq 1,$ we have
\begin{align*}
\underset{\substack{ h_1^{(i)}, \ldots, h_{l_i}^{(i)} \in J_i \text{ \normalfont distinct}\\ h_1^{(i)} \equiv \cdots \equiv h_{l_i}^{(i)} \equiv b \Mod{q} }}{\sum \cdots \sum} \mathfrak{S}(\mathcal{H};q)=(1+o_{l_1,\ldots,l_r}(1))\prod_{i=1}^r \left(\frac{|J_i|}{\varphi(q)} \right)^{l_i}
\end{align*}
as $\min_{1 \leq i \leq r} |J_i| \to \infty,$ where $\mathcal{H}=\{ h_1^{(i)}, \ldots, h_{l_i}^{(i)} \}_{i=1}^r,$ i.e., the singular series $\mathfrak{S}(\mathcal{H};q)$ is, on average, $(\varphi(q)/q)^{-(l_1+\cdots+l_r)}$  along a product of lattices of ranks $l_1, \ldots, l_r.$
\end{proposition}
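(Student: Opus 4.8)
The plan is to expand the singular series $\mathfrak{S}(\mathcal{H};q) = (\varphi(q)/q)^{-1}\mathfrak{S}(\mathcal{H})$ into a Dirichlet-series-type sum over the appropriate ``local factors'' and then execute the summation over the lattice points term by term. Writing $L = l_1 + \cdots + l_r$, the factor $(\varphi(q)/q)^{-L}$ is exactly what must be produced, so after dividing it out the task is to show that the average of the classical $\mathfrak{S}(\mathcal{H})$ over the constrained tuples is $(1+o(1))\prod_{i=1}^r (|J_i|/q)^{l_i}$. The standard device (going back to Gallagher and used in \cite{moments}) is the identity
\begin{align*}
\mathfrak{S}(\mathcal{H}) = \sum_{\substack{d \geq 1 \\ d \text{ squarefree}}} \prod_{p \mid d} \frac{1}{p}\sum_{\emptyset \neq T}\cdots,
\end{align*}
more precisely the expansion $\mathfrak{S}(\mathcal{H}) = \sum_{d} \mu(d)^2 \, a_d(\mathcal{H})$ where $a_d$ is multiplicative in $d$ and, for prime $p$, $a_p(\mathcal{H})$ is a bounded function of $\mathcal{H} \bmod p$ with mean zero over all of $(\mathbb{Z}/p)^{|\mathcal{H}|}$. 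One then substitutes this into the multiple sum, swaps the order of summation to bring the sum over $d$ outside, and is left with counting tuples $(h_j^{(i)})$ lying in the boxes $J_i$, congruent to $b \bmod q$, with prescribed reductions modulo $d$.

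First I would set up the local expansion carefully, isolating the ``diagonal'' contribution (where all $h_j^{(i)}$ within a given block $i$ are forced to be equal mod some prime, which is impossible since they are distinct, or where coincidences across blocks occur) and controlling the tail in $d$: because each $a_p$ is $O(1/p^2)$ on average and supported away from collisions, the series converges and the tail $d > D$ contributes negligibly for $D$ a small power of $\min_i |J_i|$. Second, for the main range $d \leq D$ with $(d,q)=1$ essentially (the $p \mid q$ factors are absorbed into the normalization), I would count lattice points: the number of $h \in J_i$ with $h \equiv b \bmod q$ and $h \equiv c \bmod d$ is $|J_i|/(qd) + O(1)$ by the Chinese Remainder Theorem, uniformly, and the product over the $l_i$ coordinates in block $i$ and over the $r$ blocks gives the main term $\prod_i (|J_i|/q)^{l_i}$ times $\sum_d \mu(d)^2 (\text{coefficient})/d^{\,\#}$ — and the key point is that the sum over the residues mod $d$ of the $a_d$-coefficient telescopes, by the mean-zero property, to exactly $1$ when we sum over \emph{distinct} tuples (up to lower-order corrections from the distinctness constraint, which costs $O(|J_i|^{l_i - 1})$ per coincidence). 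Third, I would bound the error terms: the $O(1)$'s from CRT, when multiplied out across $L$ coordinates, produce terms of size $|J_i|^{l_i}/q^{l_i} \cdot (qd/|J_i|)$ summed against the rapidly decaying $a_d$ coefficients; these are $o(\prod_i(|J_i|/q)^{l_i})$ precisely because of the hypothesis $|J_i|/q \to \infty$, which is exactly where that assumption is used.

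The main obstacle I anticipate is the bookkeeping around the distinctness constraint combined with the cross-block interactions: unlike the one-block case of \cite[Lemma 6.1]{moments}, here the singular series couples all $L$ variables across the $r$ intervals through the admissibility-type quantity $v_p(\mathcal{H})$, so when two points from \emph{different} blocks collide mod $p$ the local factor changes, and one must check that all such off-diagonal contributions are lower order. I would handle this by a cleaner combinatorial reformulation — expanding $\prod_p(1-1/p)^{-k}(1-v_p(\mathcal{H})/p)$ via inclusion–exclusion over which pairs of the $L$ points are ``tied'' modulo each prime, organizing the resulting sum by set partitions of the $L$-element index set (this is where the Stirling numbers in the theorems ultimately come from), and noting that any partition other than the all-singletons one forces at least one genuine coincidence, hence is confined to a thin sub-lattice and loses a factor of order $q/\min_i|J_i| \to 0$. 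The rest is a routine, if lengthy, estimation that I would not grind through here; the uniformity in $b$ is automatic since all CRT counts are uniform in the residue class.
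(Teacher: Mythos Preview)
Your proposal is correct and is essentially the paper's own argument: expand $\mathfrak{S}(\mathcal{H};q)$ as $(\varphi(q)/q)^{-L}\sum_{(d,q)=1}\mu^2(d)\,a(d;\mathcal{H})$, swap the order, truncate in $d$, and count lattice points via CRT; your ``mean-zero'' property of $a_p$ over $(\mathbb{Z}/p)^L$ is exactly the paper's identity $A(p)=\sum_{v} a(p,v)\binom{p}{v}\stirling{L}{v}v!=0$, which collapses the main term to $d=1$.

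Two small corrections to your final paragraph. First, the cross-block ``obstacle'' you anticipate does not actually arise: $\mathfrak{S}(\mathcal{H})$ depends only on the unordered $L$-point set $\mathcal{H}$, not on its partition into blocks, and since the $J_i$ are disjoint the $L$ points are automatically distinct across blocks; hence Gallagher's one-block machinery runs verbatim with $L=l_1+\cdots+l_r$ in place of a single $l$, and no set-partition reformulation is needed. Second, the Stirling numbers appearing in Theorems~2.1--2.2 do not come from this proposition; they arise in the moment computation from rewriting $k$-th powers as sums over ordered distinct tuples, whereas the Stirling numbers implicit here (inside $\binom{p}{v}\stirling{L}{v}v!$) are just counting residue patterns mod $p$.
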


\begin{proof}
We shall follow the proof in \cite{MR409385} (see \cite{ford2016simpleproofgallagherssingular} for a simplification). Let $p \nmid q$ be a prime. Then we define
\begin{align*}
\left( 1-\frac{1}{p} \right)^{-k}
\left( 1-\frac{v_p(\mathcal{H})}{p} \right)
=:1+a(p,v_p(\mathcal{H})),
\end{align*}
and
\begin{align*}
a(d;\mathcal{H}):=\prod_{p \mid d} a(p,v_p(\mathcal{H}))
\end{align*}
for any square-free integer $d$ coprime to $q.$ It follows from the definition that
\begin{align*}
\mathfrak{S}(\mathcal{H};q)=\sum_{\substack{d=1\\(d,q)=1}}^{\infty} 
\mu^2(d) a(d;\mathcal{H}),
\end{align*}
which can be verified to be absolutely convergent, so that
\begin{align} \label{eq:latticesum}
\underset{\substack{ h_1^{(i)}, \ldots, h_{l_i}^{(i)} \in J_i \text{ \normalfont distinct}\\ h_1^{(i)} \equiv \cdots \equiv h_{l_i}^{(i)} \equiv b \Mod{q} }}{\sum \cdots \sum} \mathfrak{S}(\mathcal{H};q) &=
\left( \frac{\varphi(q)}{q} \right)^{-|\boldsymbol{l}|}
\sum_{\substack{d=1\\(d,q)=1}}^{\infty} \mu^2(d) 
\underset{\substack{ h_1^{(i)}, \ldots, h_{l_i}^{(i)} \in J_i \text{ \normalfont distinct}\\ h_1^{(i)} \equiv \cdots \equiv h_{l_i}^{(i)} \equiv b \Mod{q} }}{\sum \cdots \sum} a(d;\mathcal{H}) \nonumber \\
&=:\left( \frac{\varphi(q)}{q} \right)^{-|\boldsymbol{l}|}
\sum_{\substack{d=1\\(d,q)=1}}^{\infty} \mu^2(d) S'(d),
\end{align}
where $|\boldsymbol{l}|:=l_1+\cdots+l_r.$ Arguing as in \cite{MR409385}, one can 
show that
\begin{align} \label{eq:d>D}
\sum_{\substack{d>D\\(d,q)=1}} \mu^2(d) S'(d)
\ll_{\boldsymbol{l},\epsilon}
(D|J|)^{\epsilon} \frac{|J|}{q^r D},
\end{align}
where $J:=\prod_{i=1}^r J_i.$
From now on, we always assume $1 \leq d \leq D.$ Interchanging the order of summation, the sum $S'(d)$ becomes
\begin{gather*}
\sum_{\boldsymbol{v}=(v_p)_{p \mid d}} \prod_{p|d} a(p,v_p)
\underset{\substack{ h_1^{(i)}, \ldots, h_{l_i}^{(i)} \in J_i \text{ \normalfont distinct}\\ h_1^{(i)} \equiv \cdots \equiv h_{l_i}^{(i)} \equiv b \Mod{q} \\ (v_p(\mathcal{H}))_{p \mid d} = \boldsymbol{v} }}{\sum \cdots \sum} 1
=:S(d)-R(d),
\end{gather*}
where $\boldsymbol{v}=(v_p)_{p \mid d} \in \mathbb{N}^{w(d)}$ with $w(d):=\#\{ p \,:\, p | d\},$
\begin{align*}
S(d):=\sum_{\boldsymbol{v}=(v_p)_{p \mid d} } \prod_{p|d} a(p,v_p)
\underset{\substack{ h_1^{(i)}, \ldots, h_{l_i}^{(i)} \in J_i \\ h_1^{(i)} \equiv \cdots \equiv h_{l_i}^{(i)} \equiv b \Mod{q} \\ (v_p(\mathcal{H}))_{p \mid d} = \boldsymbol{v} }}{\sum \cdots \sum} 1,
\end{align*}
and $R(d)$ is the contribution of all non-distinct terms to $S(d)$, which is
\begin{align*}
\ll_r \sum_{\boldsymbol{v}=(v_p)_{p \mid d}} \prod_{p|d} |a(p,v_p)| \left( \min_{1 \leq i \leq r} \frac{|J_i|}{q} \right)^{-1} \prod_{i=1}^r \left( \frac{|J_i|}{q} \right)^{l_i}.
\end{align*}
Applying the Chinese remainder theorem, the inner sum of $S(d)$ is
\begin{gather*}
\prod_{i=1}^r \left( \frac{|J_i|}{qd} +O(1) \right)^{l_i}
\prod_{p \mid d} {p \choose v_p} \stirling{|\boldsymbol{l}|}{v_p} v_p! \\
= \left( 1+O_r \left( \left( \min_{1 \leq i \leq r} \frac{|J_i|}{qd} \right)^{-1} \right) \right)\prod_{i=1}^r \left( \frac{|J_i|}{qd} \right)^{l_i}
\prod_{p \mid d} {p \choose v_p} \stirling{|\boldsymbol{l}|}{v_p} v_p!,
\end{gather*}
so that
\begin{align} \label{eq:sd}
S'(d)=\prod_{i=1}^r \left( \frac{|J_i|}{qd} \right)^{l_i} A(d) +&
O_r \left( \left( \min_{1 \leq i \leq r} \frac{|J_i|}{qd} \right)^{-1} \prod_{i=1}^r \left( \frac{|J_i|}{qd} \right)^{l_i} B(d) \right) \nonumber\\
+& O_r  \left( \left( \min_{1 \leq i \leq r} \frac{|J_i|}{q} \right)^{-1} \prod_{i=1}^r \left( \frac{|J_i|}{q} \right)^{l_i} C(d) \right),
\end{align}
where
\begin{align*}
A(d):=&\sum_{\boldsymbol{v}} \prod_{p \mid d} a(p,v_p) 
{p \choose v_p} \stirling{|\boldsymbol{l}|}{v_p} v_p!
=\prod_{p \mid d} \sum_{v_p=1}^p a(p,v_p) 
{p \choose v_p} \stirling{|\boldsymbol{l}|}{v_p} v_p!
, \\
B(d):=&\sum_{\boldsymbol{v}} \prod_{p \mid d} |a(p,v_p)|
{p \choose v_p} \stirling{|\boldsymbol{l}|}{v_p} v_p!
=\prod_{p \mid d} \sum_{v_p=1}^p |a(p,v_p)| 
{p \choose v_p} \stirling{|\boldsymbol{l}|}{v_p} v_p!
\end{align*}
and
\begin{align*}
C(d):=\sum_{\boldsymbol{v}} \prod_{p \mid d} |a(p,v_p)|
=\prod_{p \mid d} \sum_{v_p=1}^p |a(p,v_p)|.
\end{align*}
Arguing as in \cite{MR409385}, the contribution of the big $O$ terms to (\ref{eq:latticesum}) is 
\begin{align} \label{eq:d<D}
\ll_{\epsilon} D^{1+\epsilon} \left( \min_{1 \leq i \leq r} \frac{|J_i|}{q} \right)^{-1} \prod_{i=1}^r \left( \frac{|J_i|}{q} \right)^{l_i}.
\end{align}

Finally, by definition the $p$-th factor $A(p)$ is
\begin{align*}
\frac{1}{(p-1)^{|\boldsymbol{l}|}}
\left( (p^{|\boldsymbol{l}|}-(p-1)^{|\boldsymbol{l}|}) \sum_{v=1}^p {p \choose v} \stirling{|\boldsymbol{l}|}{v} v! - 
p^{|\boldsymbol{l}|-1}  \sum_{v=1}^p v{p \choose v} \stirling{|\boldsymbol{l}|}{v} v!
\right).
\end{align*}
Using the identities 
\begin{align*}
\sum_{v=1}^p {p \choose v} \stirling{|\boldsymbol{l}|}{v} v!
= p^{|\boldsymbol{l}|}
\end{align*}
and
\begin{align*}
\sum_{v=1}^p v{p \choose v} \stirling{|\boldsymbol{l}|}{v} v!
=  (p^{|\boldsymbol{l}|}-(p-1)^{|\boldsymbol{l}|})p
\end{align*}
(see \cite[p. 8]{MR409385}), we conclude that $A(d)=1$ if $d=1$ and vanishes otherwise. Therefore, by taking 
\begin{align*}
D=\left(\min_{1 \leq i \leq r} \frac{|J_i|}{q}\right)^{1/2},
\end{align*}
the proposition follows from (\ref{eq:latticesum}), (\ref{eq:d>D}), (\ref{eq:sd}) and (\ref{eq:d<D}).
\end{proof}
We also require an upper bound for individual (classical) singular series.

\begin{lemma} \label{lem:upperbound}
Given integers $k,q\geq 1,$ let $H \geq 2q$ be an integer and $\mathcal{H} \subseteq [0,H]$ be a set consisting of $k$ distinct multiples of $q$. Then
\begin{align*}
\mathfrak{S}(\mathcal{H}) \ll_k \left(\log \frac{H}{q} \right)^{k-1}.
\end{align*}
\end{lemma}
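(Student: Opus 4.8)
The plan is to rescale the tuple by $q$ and then run the classical argument bounding the singular series of a tuple of integers. One may assume $\mathcal{H}$ is admissible, since otherwise $\mathfrak{S}(\mathcal{H})=0$, and that $k\ge 2$, since for $k=1$ one has $\mathfrak{S}(\mathcal{H})=1$. Write the elements of $\mathcal{H}$ as $h_i=qn_i$ with $n_1<\cdots<n_k$; then $\mathcal{K}:=\{n_1,\dots,n_k\}$ consists of $k$ distinct integers lying in an interval of length at most $H/q$, so every difference $n_i-n_j$ is a nonzero integer with $|n_i-n_j|\le H/q$. For a prime $p\mid q$ every element of $\mathcal{H}$ is divisible by $p$, so $v_p(\mathcal{H})=1$; for $p\nmid q$ multiplication by $q$ permutes the residues modulo $p$, so $v_p(\mathcal{H})=v_p(\mathcal{K})$ and $p\mid h_i-h_j$ if and only if $p\mid n_i-n_j$. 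This gives the factorisation
\[
\mathfrak{S}(\mathcal{H})=\Big(\frac{q}{\varphi(q)}\Big)^{k-1}\prod_{p\nmid q}\Big(1-\frac1p\Big)^{-k}\Big(1-\frac{v_p(\mathcal{K})}{p}\Big),
\]
in which the prefactor $(q/\varphi(q))^{k-1}$ is $O_k(1)$, so it suffices to bound the product over $p\nmid q$ by $(\log(H/q))^{k-1}$.

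I would split that product at $p=2k$. For the boundedly many primes $p\le 2k$ with $p\nmid q$, admissibility gives $1\le v_p(\mathcal{K})\le p-1$, so each local factor is at most $(1-1/p)^{-(k-1)}\le 2^{k-1}$, and their product is $O_k(1)$. For $p>2k$ one has $v_p(\mathcal{K})\le k<p/2$, so expanding the logarithm gives
\[
\Big(1-\frac1p\Big)^{-k}\Big(1-\frac{v_p(\mathcal{K})}{p}\Big)=\exp\!\Big(\frac{m_p}{p}+O_k(p^{-2})\Big),\qquad m_p:=k-v_p(\mathcal{K})\ge 0,
\]
and since $\sum_p p^{-2}<\infty$ the remaining product is $\ll_k\exp\!\big(\sum_p m_p/p\big)$. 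Everything thus reduces to the estimate $\sum_p m_p/p\ll_k\log\log(H/q)$.

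For that I would use the ordered reading of $v_p$: scanning $n_1,\dots,n_k$ modulo $p$ in increasing order, $v_p(\mathcal{K})$ is precisely the number of indices at which a residue class occurs for the first time, so $m_p=\sum_{i=2}^{k}1_{\,p\,\mid\,\prod_{j<i}(n_i-n_j)}$. Interchanging the order of summation and using that $0<\prod_{j<i}(n_i-n_j)\le(H/q)^{i-1}\le(H/q)^{k-1}$,
\[
\sum_p\frac{m_p}{p}=\sum_{i=2}^{k}\ \sum_{p\,\mid\,\prod_{j<i}(n_i-n_j)}\frac1p\le(k-1)\!\!\sum_{p\le(H/q)^{k-1}}\!\!\frac1p\le(k-1)\big(\log\log(H/q)+O_k(1)\big)
\]
by Mertens' theorem. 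Exponentiating, the product over $p>2k$ is $\ll_k(\log(H/q))^{k-1}$, and collecting the three contributions gives the lemma.

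The one genuinely essential point is the penultimate display: one must bound the coincidence count $m_p$ by the \emph{ordered} sum $\sum_{i\ge 2}1_{p\mid\prod_{j<i}(n_i-n_j)}$, which charges each residue coincidence modulo $p$ to a single index $i$; the cruder pairwise count $\#\{(i,j):p\mid n_i-n_j\}$ would only yield the weaker exponent $\binom{k}{2}$ in place of $k-1$. The admissibility reduction, the primes dividing $q$, and the finitely many small primes are all routine.
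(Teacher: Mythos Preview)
Your argument has one genuine slip: the claim that the prefactor $(q/\varphi(q))^{k-1}$ is $O_k(1)$ is false, since $q/\varphi(q)$ can be as large as a constant times $\log\log q$ (take $q$ to be a primorial). In fact the lemma as stated seems to require this extra factor; the paper's own proof makes the same omission, silently dropping the primes $p\mid q$ from its factorisation of $\mathfrak{S}(\mathcal{H})$, and in the application (where $H\asymp\varphi(q)\log q$) the missing factor is harmless. So this is a shared gap rather than one peculiar to your write-up.

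Setting that aside, your route is correct but considerably more elaborate than the paper's. The paper simply splits the Euler product at $p=H/q$: for $p>H/q$ with $p\nmid q$ the rescaled integers $n_i=h_i/q$ lie in an interval of length $<p$ and are therefore distinct modulo $p$, so $v_p(\mathcal{H})=k$ and the local factor is $1+O_k(p^{-2})$; for $p\le H/q$ it uses nothing beyond the trivial inequality $v_p(\mathcal{H})\ge 1$, which bounds each factor by $(1-1/p)^{-(k-1)}$, and Mertens' theorem gives $(\log(H/q))^{k-1}$ immediately. Your ordered-coincidence argument---writing $m_p=k-v_p(\mathcal{K})=\sum_{i\ge2}\mathbf{1}_{p\mid\prod_{j<i}(n_i-n_j)}$ and summing $m_p/p$---is valid and does recover the exponent $k-1$, but it is machinery one would reach for when seeking refined or average estimates on singular series; for a crude pointwise upper bound the paper's two-line argument needs no combinatorics at all, so the ``genuinely essential'' step you highlight is in fact avoidable here.
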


\begin{proof}
By definition, we have
\begin{align} \label{eq:prod}
\mathfrak{S}\left( \mathcal{H} \right)
=
\prod_{\substack{p \leq H/q \\ p \nmid q}} \left( 1-\frac{1}{p} \right)^{-k}
\left( 1-\frac{v_p(\mathcal{H})}{p} \right)
\prod_{\substack{p > H/q \\ p \nmid q}} \left( 1-\frac{1}{p} \right)^{-k}
\left( 1-\frac{v_p(\mathcal{H})}{p} \right).
\end{align}
Since the set $\mathcal{H}$ consists of distinct multiples of $q$, we must have $v_p(\mathcal{H})=k$ provided that $p>H/q$ and $p \nmid q,$ so that the last product is 
\begin{align*}
\prod_{\substack{p > H/q \\ p \nmid q}} \left( 1-\frac{1}{p} \right)^{-k}
\left( 1-\frac{v_p(\mathcal{H})}{p} \right)
&=\prod_{\substack{p > H/q \\ p \nmid q}} \left( 1+O_k\left( \frac{1}{ p^2 } \right) \right)\\
&=1+O_k\bigg( \left( \frac{H}{q}\log \frac{H}{q} \right)^{-1} \bigg).
\end{align*}
On the other hand, since the first product of (\ref{eq:prod}) is $\ll_k \log^{k-1}(H/q)$ using Mertens' estimates, the lemma follows.  
\end{proof}


\section{Proof of Theorem \ref{thm:ap}}

To lighten the notation, we adopt the convention $\widetilde{x}:=\frac{x}{\varphi(q)\log q}.$ Without loss of generality, we assume $I_i=(\alpha_i,\beta_i]$ with $0<\alpha_1<\beta_1<\cdots<\alpha_r<\beta_r$ and $k_i \geq 1$ for $i=1,\ldots, r.$ Then, adapting the proof of \cite[Theorem 1.2]{moments}, we have
\begin{gather*}
\frac{1}{\varphi(q)}\sideset{}{^*}\sum_{a \Mod{q}}
\prod_{i=1}^{r}  \#\left\{ p \equiv a \Mod{q} \,:\, 
\frac{p}{\varphi(q)\log q} \in I_i
\right\}^{k_i}  \\
=\frac{1}{\varphi(q)}\sideset{}{^*}\sum_{a \Mod{q}}
\prod_{i=1}^{r} 
\underset{\substack{ \widetilde{p_1^{(i)}}, \ldots, \widetilde{p_{k_i}^{(i)}} \in I_i \\ p_1^{(i)} \equiv \cdots \equiv p_{k_i}^{(i)} \equiv a \Mod{q} }}{\sum \cdots \sum} 1 \\
=\frac{1}{\varphi(q)}\sideset{}{^*}\sum_{a \Mod{q}} 
\prod_{i=1}^r \sum_{j_i=1}^{k_i} \stirling{k_i}{j_i} j_i ! 
\underset{\substack{ \widetilde{p_1^{(i)}}< \ldots< \widetilde{p_{j_i}^{(i)}} \in I_i \\ p_1^{(i)} \equiv \cdots \equiv p_{j_i}^{(i)} \equiv a \Mod{q} }}{\sum \cdots \sum} 1.
\end{gather*}
Making the change of variables $p_1^{(i)}=n_i+d_1^{(i)}, \ldots, p_{j_i}^{(i)}=n_i+d_{j_i}^{(i)}$ for $i=1,\ldots,r,$ this becomes
\begin{align*}
\frac{1}{\varphi(q)}&\sideset{}{^*}\sum_{a \Mod{q}}
\prod_{i=1}^r \sum_{j_i=1}^{k_i} \stirling{k_i}{j_i} j_i ! 
\underset{\substack{0=d_1^{(i)}<\cdots<d_{j_i}^{(i)} \\ 
d_1^{(i)} \equiv \cdots \equiv d_{j_i}^{(i)} \equiv 0 \Mod{q}}}{\sum \cdots \sum} \\
&\sum_{\substack{n_i \,:\, \widetilde{n_i+d_1^{(i)}}, \ldots, \widetilde{n_i+d_{j_i}^{(i)}} \in I_i \\ n_i \equiv a \Mod{q} }} 1_{\mathbb{P}}(n_i+d_1^{(i)}) \cdots 1_{\mathbb{P}}(n_i+d_{j_i}^{(i)}) \\
=\frac{1}{\varphi(q)}& \sideset{}{^*}\sum_{a \Mod{q}} \underset{1 \leq j_i \leq k_i, 1 \leq i \leq r}{\sum \cdots \sum}
\left( \prod_{i=1}^r \stirling{k_i}{j_i} j_i! \right)
\underset{\substack{0=d_1^{(i)}<\cdots<d_{j_i}^{(i)} \\ 
d_1^{(i)} \equiv \cdots \equiv d_{j_i}^{(i)} \equiv 0 \Mod{q}}}{\sum \cdots \sum} \\
& \underset{\substack{n_i \,:\, \widetilde{n_i+d_1^{(i)}}, \ldots, \widetilde{n_i+d_{j_i}^{(i)}} \in I_i \\ n_i \equiv a \Mod{q}}}{\sum \cdots \sum}
\prod_{i=1}^r 1_{\mathbb{P}}(n_i+d_1^{(i)}) \cdots 1_{\mathbb{P}}(n_i+d_{j_i}^{(i)}) \\
=
\frac{1}{\varphi(q)} & \underset{1 \leq j_i \leq k_i, 1 \leq i \leq r}{\sum \cdots \sum}
\left(\prod_{i=1}^r \stirling{k_i}{j_i} j_i! \right)
\underset{\substack{0=d_1^{(i)}<\cdots<d_{j_i}^{(i)} \\ 
d_1^{(i)} \equiv \cdots \equiv d_{j_i}^{(i)} \equiv 0 \Mod{q}}}{\sum \cdots \sum} \\
& \underset{\substack{n_i \,:\, \widetilde{n_i+d_1^{(i)}}, \ldots, \widetilde{n_i+d_{j_i}^{(i)}} \in I_i \\ n_1 \equiv \cdots \equiv n_k \Mod{q}\\ (n_1\cdots n_k,q)=1}}{\sum \cdots \sum}
\prod_{i=1}^r 1_{\mathbb{P}}(n_i+d_1^{(i)}) \cdots 1_{\mathbb{P}}(n_i+d_{j_i}^{(i)}).
\end{align*}
Making another change of variables $n_i=n+h_i$ for $i=1, \ldots, r,$ this becomes
\begin{gather}
\frac{1}{\varphi(q)}  \underset{1 \leq j_i \leq k_i, 1 \leq i \leq r}{\sum \cdots \sum}
\left(\prod_{i=1}^r \stirling{k_i}{j_i} j_i! \right)
\underset{\substack{0=d_1^{(i)}<\cdots<d_{j_i}^{(i)} \\ 
d_1^{(i)} \equiv \cdots \equiv d_{j_i}^{(i)} \equiv 0 \Mod{q}}}{\sum \cdots \sum}
\underset{\substack{0=h_1<\cdots<h_r\\h_1 \equiv \cdots \equiv h_r \equiv 0 \Mod{q}}}{\sum \cdots \sum} \nonumber\\ \label{eq:2ndcov}
\underset{\substack{n \,:\, \widetilde{n+h_i+d_1^{(i)}}, \ldots, \widetilde{n+h_i+d_{j_i}^{(i)}} \in I_i \\ (n,q)=1}}{\sum}
\prod_{i=1}^r 1_{\mathbb{P}}(n+h_i+d_1^{(i)}) \cdots 1_{\mathbb{P}}(n+h_i+d_{j_i}^{(i)}). 
\end{gather}
Since $ \widetilde{n+h_i+d_1^{(i)}}, \ldots, \widetilde{n+h_i+d_{j_i}^{(i)}} \in I_i$
for $i=1,\ldots,r$ if and only if 
\begin{align*}
\widetilde{n} \in \left( \max_{1 \leq i \leq r} (\alpha_i-\widetilde{h_i}), \min_{1 \leq i \leq r} (\beta_i-\widetilde{h_i}-\widetilde{d_{j_i}^{(i)}}) \right]=:I_{\boldsymbol{\alpha}, \boldsymbol{\beta};\boldsymbol{d}, \boldsymbol{h} },
\end{align*}
where $\boldsymbol{\alpha}:=(\alpha_1,\ldots,\alpha_r), \ldots, \boldsymbol{h}:=(h_1,\ldots,h_r),$ the expression (\ref{eq:2ndcov}) is $\Sigma_1+\Sigma_2,$ where
\begin{gather} 
\Sigma_1:= 
\frac{1}{\varphi(q)}  \underset{1 \leq j_i \leq k_i, 1 \leq i \leq r}{\sum \cdots \sum}
\left(\prod_{i=1}^r \stirling{k_i}{j_i} j_i! \right)
\underset{\substack{0=d_1^{(i)}<\cdots<d_{j_i}^{(i)} \\ 
d_1^{(i)} \equiv \cdots \equiv d_{j_i}^{(i)} \equiv 0 \Mod{q}}}{\sum \cdots \sum} \,
\underset{\substack{0=h_1<\cdots<h_r\\h_1 \equiv \cdots \equiv h_r \equiv 0 \Mod{q}\\
|I_{\boldsymbol{\alpha}, \boldsymbol{\beta};\boldsymbol{d}, \boldsymbol{h} }|>\widetilde{q}
}}{\sum \cdots \sum} \nonumber\\ 
\label{eq:selberg}
\underset{\substack{\widetilde{n} \in I_{\boldsymbol{\alpha}, \boldsymbol{\beta};\boldsymbol{d}, \boldsymbol{h} } \\ (n,q)=1}}{\sum}
\prod_{i=1}^r 1_{\mathbb{P}}(n+h_i+d_1^{(i)}) \cdots 1_{\mathbb{P}}(n+h_i+d_{j_i}^{(i)})
\end{gather}
and $\Sigma_2$ is the same sum but with the condition $|I_{\boldsymbol{\alpha}, \boldsymbol{\beta};\boldsymbol{d}, \boldsymbol{h} }|>\widetilde{q}$ replaced by $|I_{\boldsymbol{\alpha}, \boldsymbol{\beta};\boldsymbol{d}, \boldsymbol{h} }| \leq \widetilde{q}.$


Let us deal with $\Sigma_2$ first. Since $\widetilde{n} \in I_{\boldsymbol{\alpha}, \boldsymbol{\beta};\boldsymbol{d}, \boldsymbol{h} }$ and $|I_{\boldsymbol{\alpha}, \boldsymbol{\beta};\boldsymbol{d}, \boldsymbol{h} }| \leq \widetilde{q},$ by a simple application of the Selberg sieve (see \cite[Theorem 6.7]{MR2061214} for instance), the second line of (\ref{eq:selberg}) is
\begin{gather*}
\leq 
\sum_{\widetilde{n} \in I_{\boldsymbol{\alpha}, \boldsymbol{\beta};\boldsymbol{d}, \boldsymbol{h} }}
\prod_{i=1}^r 1_{\mathbb{P}}(n+h_i+d_1^{(i)}) \cdots 1_{\mathbb{P}}(n+h_i+d_{j_i}^{(i)}) \\
\ll_{\boldsymbol{k}} \mathfrak{S}(\{ h_i+d_1^{(i)}, \ldots, h_i+d_{j_i}^{(i)} \}_{i=1}^r) \times \frac{q}{(\log q)^{j_1+\cdots+j_r}},
\end{gather*}
where $\boldsymbol{k}=(k_1,\ldots,k_r)$. Therefore, it follows from Lemma \ref{lem:upperbound} that the sum $\Sigma_2$ is
\begin{gather*}
\ll_{\boldsymbol{k}} 
\frac{q}{\varphi(q)} \times \max_{j_1,\ldots,j_r} \frac{1}{(\log q)^{j_1+\cdots+j_r}}
\underset{\substack{0=\widetilde{d_1^{(i)}}<\cdots<\widetilde{d_{j_i}^{(i)}} <\beta_i-\alpha_i \\ 
d_{1}^{(i)} \equiv \cdots \equiv d_{j_i}^{(i)} \equiv 0 \Mod{q}}}{\sum \cdots \sum} \\
\underset{\substack{0=\widetilde{h_1}<\cdots<\widetilde{h_r}<\beta_r-\alpha_1\\h_1 \equiv \cdots \equiv h_r \equiv 0 \Mod{q}\\
|I_{\boldsymbol{\alpha}, \boldsymbol{\beta};\boldsymbol{d}, \boldsymbol{h} }| \leq \widetilde{q}
}}{\sum \cdots \sum} \mathfrak{S}(\{ h_i+d_1^{(i)}, \ldots, h_i+d_{j_i}^{(i)} \}_{i=1}^r) \\
\ll_{\boldsymbol{k}, \boldsymbol{\alpha}, \boldsymbol{\beta}} 
\frac{q}{\varphi(q)}\left(\log \frac{\varphi(q)\log q}{q} \right)^{k-1} 
 \max_{j_1,\ldots,j_r} \frac{1}{(\log q)^{j_1+\cdots+j_r}} \\
\underset{\substack{0=\widetilde{d_1^{(i)}}<\cdots<\widetilde{d_{j_i}^{(i)}}<\beta_i-\alpha_i \\ 
d_1^{(i)} \equiv \cdots \equiv d_{j_i}^{(i)} \equiv 0 \Mod{q}}}{\sum \cdots \sum} 
\underset{\substack{0=\widetilde{h_1}<\cdots<\widetilde{h_r}<\beta_r-\alpha_1\\h_1 \equiv \cdots \equiv h_r \equiv 0 \Mod{q}
}}{\sum \cdots \sum} 1 \\
\ll_{\boldsymbol{k}, \boldsymbol{\alpha}, \boldsymbol{\beta}} \frac{q}{\varphi(q)}\left(\log \frac{\varphi(q)\log q}{q} \right)^{k-1}
 \max_{j_1,\ldots,j_r} \frac{1}{(\log q)^{j_1+\cdots+j_r}} \times
 (\log q)^{(j_1-1)+\cdots+(j_r-1)+(r-1)} \\
\ll_{\boldsymbol{k}, \boldsymbol{\alpha}, \boldsymbol{\beta}} \frac{q}{\varphi(q)\log q}\left(\log \frac{\varphi(q)\log q}{q} \right)^{|\boldsymbol{k}|-1},
\end{gather*}
where $|\boldsymbol{k}|:=k_1+\cdots+k_r,$ which is negligible.

It remains to estimate $\Sigma_1,$ which can be split into two sums $\Sigma_{1,1}$ and $\Sigma_{1,2},$ where
\begin{gather*}
\Sigma_{1,1}:=
\frac{1}{\varphi(q)}  \underset{1 \leq j_i \leq k_i, 1 \leq i \leq r}{\sum \cdots \sum}
\left(\prod_{i=1}^r \stirling{k_i}{j_i} j_i! \right)
\underset{\substack{0=d_1^{(i)}<\cdots<d_{j_i}^{(i)} \\ 
d_1^{(i)} \equiv \cdots \equiv d_{j_i}^{(i)} \equiv 0 \Mod{q}}}{\sum \cdots \sum} \,
\underset{\substack{0=h_1<\cdots<h_r\\h_1 \equiv \cdots \equiv h_r \equiv 0 \Mod{q}\\
|I_{\boldsymbol{\alpha}, \boldsymbol{\beta};\boldsymbol{d}, \boldsymbol{h} }| > \widetilde{q} \\
\{ h_i+d_1^{(i)}, \ldots, h_i+d_{j_i}^{(i)} \}_{i=1}^r \text{ admissible}
}}{\sum \cdots \sum} \nonumber\\ 
\sum_{\substack{\widetilde{n} \in I_{\boldsymbol{\alpha}, \boldsymbol{\beta};\boldsymbol{d}, \boldsymbol{h} } \\ (n,q)=1}}
\prod_{i=1}^r 1_{\mathbb{P}}(n+h_i+d_1^{(i)}) \cdots 1_{\mathbb{P}}(n+h_i+d_{j_i}^{(i)})
\end{gather*}
and $\Sigma_{1,2}$ is the sum of the remaining terms, i.e., the set $\{ h_i+d_1^{(i)}, \ldots, h_i+d_{j_i}^{(i)} \}_{i=1}^r$ is non-admissible.  

Suppose $\{ h_i+d_1^{(i)}, \ldots, h_i+d_{j_i}^{(i)} \}_{i=1}^r$ is non-admissible. Then
\begin{align*}
\sum_{\substack{\widetilde{n} \in I_{\boldsymbol{\alpha}, \boldsymbol{\beta};\boldsymbol{d}, \boldsymbol{h} } \\ (n,q)=1}}
\prod_{i=1}^r 1_{\mathbb{P}}(n+h_i+d_1^{(i)}) \cdots 1_{\mathbb{P}}(n+h_i+d_{j_i}^{(i)})
\leq |\boldsymbol{k}|,
\end{align*}
so that the sum $\Sigma_{1,2}$ is
\begin{gather*}
\leq 
\frac{1}{\varphi(q)}  \underset{1 \leq j_i \leq k_i, 1 \leq i \leq r}{\sum \cdots \sum}
\left(\prod_{i=1}^r \stirling{k_i}{j_i} j_i! \right)
\underset{\substack{0=\widetilde{d_1^{(i)}}<\cdots<\widetilde{d_{j_i}^{(i)}}<\beta_i-\alpha_i \\ 
d_1^{(i)} \equiv \cdots \equiv d_{j_i}^{(i)} \equiv 0 \Mod{q}}}{\sum \cdots \sum} \,
\underset{\substack{0=\widetilde{h_1}<\cdots<\widetilde{h_r}<\beta_r-\alpha_1\\h_1 \equiv \cdots \equiv h_r \equiv 0 \Mod{q}}}{\sum \cdots \sum} |\boldsymbol{k}| \\
\ll_{\boldsymbol{k}, \boldsymbol{\alpha}, \boldsymbol{\beta}}
\frac{1}{\varphi(q)} \left( \frac{\varphi(q)\log q}{q} \right)^{|\boldsymbol{k}|-1},
\end{gather*}
which is again negligible.

Finally, suppose $\{ h_i+d_1^{(i)}, \ldots, h_i+d_{j_i}^{(i)} \}_{i=1}^r$ is admissible. Then assuming HL[$q$] uniformly for $0 \leq h_i+d_1^{(i)},\dots, h_i+d_{k_i}^{(i)} \leq (\beta_r-\alpha_1) \varphi(q)\log q$, we have
\begin{align*}
&\sum_{\substack{\widetilde{n} \in I_{\boldsymbol{\alpha}, \boldsymbol{\beta};\boldsymbol{d}, \boldsymbol{h} } \\ (n,q)=1}}
\prod_{i=1}^r 1_{\mathbb{P}}(n+h_i+d_1^{(i)}) \cdots 1_{\mathbb{P}}(n+h_i+d_{j_i}^{(i)}) \\
=&
(1+o_{\boldsymbol{k}, \boldsymbol{\alpha},\boldsymbol{\beta}}(1))
\mathfrak{S}(\{ h_i+d_1^{(i)}, \ldots, h_i+d_{j_i}^{(i)} \}_{i=1}^r;q) \\
&\sum_{\substack{\widetilde{n} \in I_{\boldsymbol{\alpha}, \boldsymbol{\beta};\boldsymbol{d}, \boldsymbol{h} }\\(n,q)=1}} 
\left( \prod_{i=1}^r \log (n+h_i+d_1^{(i)}) \cdots \log (n+h_i+d_{j_i}^{(i)}) \right)^{-1},
\end{align*}
so that the sum $\Sigma_{1,1}$ is
\begin{gather*}
\frac{1}{\varphi(q)}  \underset{1 \leq j_i \leq k_i, 1 \leq i \leq r}{\sum \cdots \sum}
\left(\prod_{i=1}^r \stirling{k_i}{j_i} j_i! \right)
\underset{\substack{0=d_1^{(i)}<\cdots<d_{j_i}^{(i)} \\ 
d_1^{(i)} \equiv \cdots \equiv d_{j_i}^{(i)} \equiv 0 \Mod{q}}}{\sum \cdots \sum} \,
\underset{\substack{0=h_1<\cdots<h_r\\h_1 \equiv \cdots \equiv h_r \equiv 0 \Mod{q}\\ |I_{\boldsymbol{\alpha}, \boldsymbol{\beta};\boldsymbol{d}, \boldsymbol{h} }| > \widetilde{q} \\ \{ h_i+d_1^{(i)}, \ldots, h_i+d_{j_i}^{(i)} \}_{i=1}^r \text{ admissible}}}{\sum \cdots \sum} \\
\frac{1+o_{\boldsymbol{k}, \boldsymbol{\alpha},\boldsymbol{\beta}}(1)}{(\log q)^{j_1+\cdots+j_r}}
\underset{\substack{n \,:\, \widetilde{n+h_i+d_1^{(i)}}, \ldots, \widetilde{n+h_i+d_{j_i}^{(i)}} \in I_i \\ (n,q)=1}}{\sum \cdots \sum}
\mathfrak{S}(\{ h_i+d_1^{(i)}, \ldots, h_i+d_{j_i}^{(i)} \}_{i=1}^r;q).
\end{gather*}
Since the singular series $\mathfrak{S}(\{ h_i+d_1^{(i)}, \ldots, h_i+d_{j_i}^{(i)} \}_{i=1}^r;q)$ vanishes provided that the set $\{ h_i+d_1^{(i)}, \ldots, h_i+d_{j_i}^{(i)} \}_{i=1}^r$ is non-admissible, the condition on the admissibility can be disregarded. Also, the error term induced by dropping the condition $|I_{\boldsymbol{\alpha}, \boldsymbol{\beta};\boldsymbol{d}, \boldsymbol{h} }| > \widetilde{q}$ is negligible. Unravelling all the changes of variables made, we arrive at
\begin{align*}
&\frac{1+o_{\boldsymbol{k}, \boldsymbol{\alpha},\boldsymbol{\beta}}(1)}{\varphi(q)}\sideset{}{^*}\sum_{a \Mod{q}} 
\prod_{i=1}^r \sum_{j_i=1}^{k_i} \stirling{k_i}{j_i} 
\frac{j_i!}{(\log q)^{j_i}}
\underset{\substack{ \widetilde{n_1^{(i)}}< \ldots< \widetilde{n_{j_i}^{(i)}} \in I_i \\ n_1^{(i)} \equiv \cdots \equiv n_{j_i}^{(i)} \equiv a \Mod{q} }}{\sum \cdots \sum} \mathfrak{S}(\mathcal{N};q), 
\end{align*}
where $\mathcal{N}:=\{ n_1^{(i)}, \ldots, n_{j_i}^{(i)} \}_{i=1}^r.$ Applying Proposition \ref{prop:sing}, this is
\begin{gather*}
\frac{1+o_{\boldsymbol{k}, \boldsymbol{\alpha},\boldsymbol{\beta}}(1)}{\varphi(q)}
\sideset{}{^*}\sum_{a \Mod{q}}
\prod_{i=1}^r \sum_{j_i=1}^{k_i} \stirling{k_i}{j_i} 
\frac{1}{(\log q)^{j_i}} \times  \prod_{i=1}^r \left(\frac{|I_i|\varphi(q)\log q }{\varphi(q)} \right)^{j_i} \\
=(1+o_{\boldsymbol{k}, \boldsymbol{\alpha},\boldsymbol{\beta}}(1))
\prod_{i=1}^r \left(\sum_{j_i=1}^{k_i} \stirling{k_i}{j_i} |I_i|^{j_i}
\right)
\end{gather*}
and the theorem follows.

\section{Some statistics}

To illustrate Corollaries \ref{thm:shortint} and \ref{thm:ap}, i.e., the pseudorandomness of primes at large scales, we provide some statistics in this section.

Regarding primes in short intervals, we set $N=100000$ and let $n \in (N, 2N]$ be an integer. Then the following sequences of points represent the normalized primes $\{ \frac{p-n}{\log N} \in (0, 20] \,:\, p>n \}$ with $n=114159, 141971, 171693$ and $182097$ respectively. These values of $n$ are chosen by taking the early few digits of $\pi$ and adding $100000$ to each.

\begin{figure}[htp]
    \centering
    \includegraphics[scale=0.7]{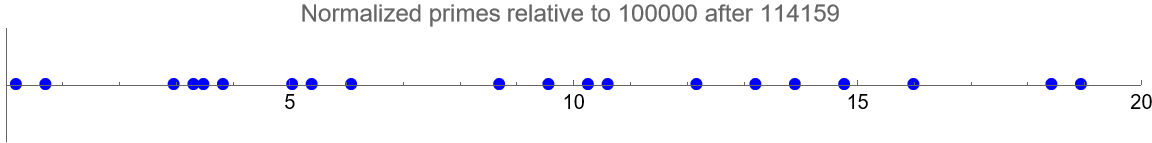}
  \\~\\
     \includegraphics[scale=0.7]{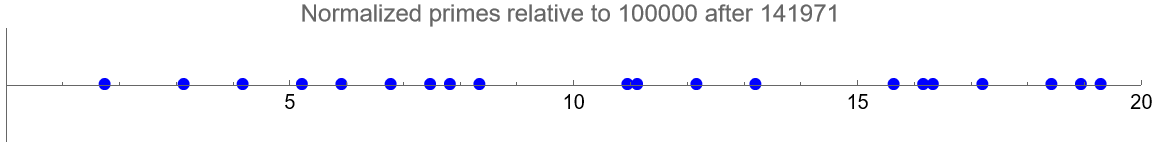}
     \\~\\
          \includegraphics[scale=0.7]{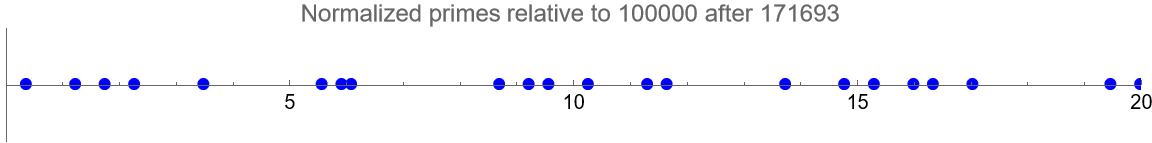}
     \\~\\
         \includegraphics[scale=0.7]{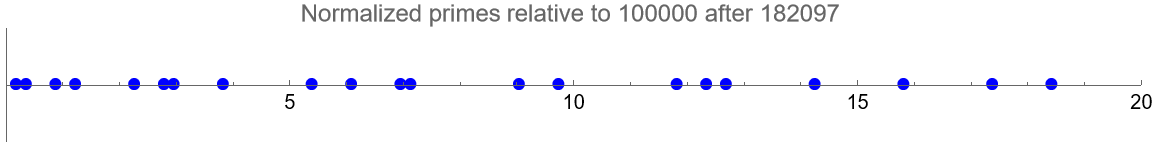}
    \caption{First few normalized primes in short intervals}
\end{figure}

Regarding primes in short progressions, we set $q=100000$ and let $a \Mod{q}$ be a reduced residue class. Then the following sequences of points represent the normalized primes $\{ \frac{p}{\varphi(q)\log q} \in (0, 20] \,:\, p \equiv a \Mod{q}  \}$ with $a \equiv 14159, 41971, 71693$ and $82097 \Mod{q}$ respectively. 

\begin{figure}[htp]
    \centering
    \includegraphics[scale=0.7]{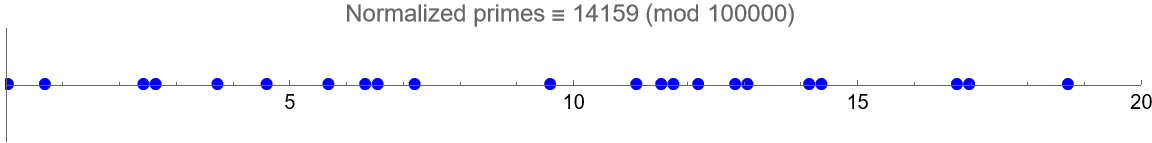}
  \\~\\
     \includegraphics[scale=0.7]{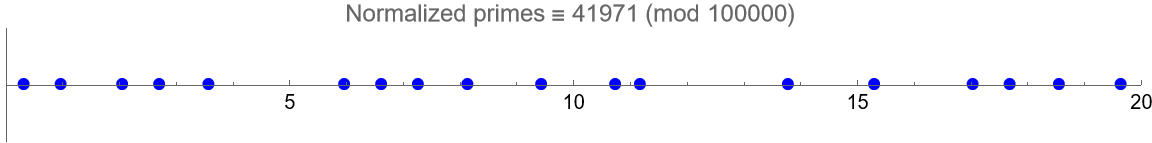}
     \\~\\
          \includegraphics[scale=0.7]{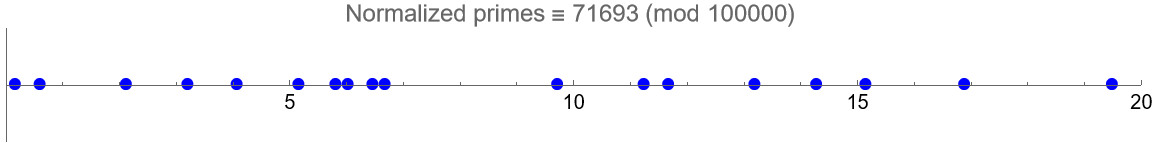}
     \\~\\
         \includegraphics[scale=0.7]{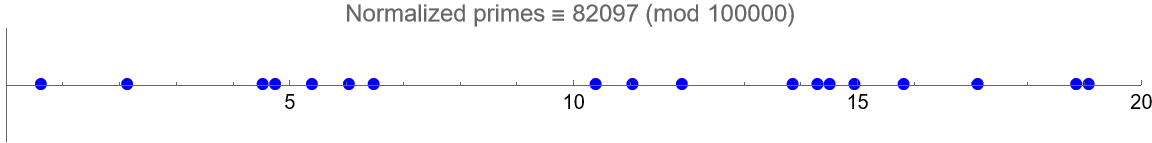}
    \caption{First few normalized primes in short progressions}
\end{figure}


\section*{Acknowledgements}
The author is grateful to Andrew Granville for his advice and encouragement. He would also like to thank Tony Haddad and Brad Rodgers for helpful discussions, Cihan Sabuncu and Christian T\'{a}fula for their careful proofreading, as well as the anonymous referee for valuable comments and corrections.

\printbibliography

\end{document}